\providecommand{\U}[1]{\protect \rule{.1in}{.1in}}
\newtheorem{theorem}{Theorem}
\newtheorem{corollary}[theorem]{Corollary}
\newtheorem{definition}[theorem]{Definition}
\newtheorem{lemma}[theorem]{Lemma}
\newtheorem{proposition}[theorem]{Proposition}
\newtheorem{remark}[theorem]{Remark}
\newenvironment{proof}[1][Proof]{\textbf{#1.} }{\  \rule{0.5em}{0.5em}}
\begin{document}

\title{A Complex of Incompressible Surfaces for handlebodies and the Mapping Class Group.}
\author{Charalampos Charitos$\dagger$, Ioannis Papadoperakis$\dagger$\\and Georgios Tsapogas$\ddagger$\\$\dagger$Agricultural University of Athens \\and $\ddagger$University of the Aegean}
\maketitle

\begin{abstract}
For a genus $g$ handlebody $H_{g}$ a simplicial complex, with vertices being
isotopy classes of certain incompressible surfaces in $H_{g}$, is constructed
and several properties are established. In particular, this complex naturally
contains, as a subcomplex, the complex of curves of the surface $\partial
H_{g}$. As in the classical theory, the group of automorphisms of this complex
is identified with the mapping class group of the handlebody. \footnote{
\textit{{2010 Mathematics Subject Classification:} 57N10, 57N35 }}

\end{abstract}

\section{Definitions and statements of results}

For a compact surface $F,$ the complex of curves $\mathcal{C}\left(  F\right)
,$ introduced by Harvey in \cite{Har}, has vertices the isotopy classes of
essential, non-boundary-parallel simple closed curves in $F.$ A collection of
vertices spans a simplex exactly when any two of them may be represented by
disjoint curves, or equivalently when there is a collection of representatives
for all of them, any two of which are disjoint. Analogously, for a
$3-$manifold $M,$ the disk complex $\mathcal{D}\left(  M\right)  $ is defined
by using the proper isotopy classes of compressing disks for $M$ as the
vertices. It was introduced in \cite{MC}, where it was used in the study of
mapping class groups of $3-$manifolds. In \cite{M-M}, it was shown to be a
quasi-convex subset of $\mathcal{C}\left(  \partial M\right)  .$

By $H_{g}$ we denote the $3-$dimensional handlebody of genus $g\geq2.$ Recall
that a compact connected surface $S\subset H_{g}$ with boundary is properly
embedded if $S\cap \partial H_{g}=\partial S$ and $S$ is transverse to
$\partial H_{g}.$ A \emph{compressing disk} for $S$ is a properly embedded
disk $D$ such that $\partial D$ is essential in $S.$ A properly embedded
surface $S\subset H_{g}$ is \emph{incompressible} if there are no compressing
disks for $S.$ Recall also that a map $F:S\times \left[  0,1\right]
\rightarrow H_{g}$ is a proper isotopy if for all $t\in \left[  0,1\right]  ,$
$F\bigm \vert_{S\times \left \{  t\right \}  }$ is a proper embedding. In this
case we will be saying that $F\left(  S\times \left \{  0\right \}  \right)  $
and $F\left(  S\times \left \{  1\right \}  \right)  $ are properly isotopic in
$H_{g}$ and we will use the symbol $\simeq$ to indicate isotopy in all cases
(curves, surfaces etc).

\begin{definition}
Let $\mathcal{I}\left(  H_{g}\right)  $ be the simplicial complex whose
vertices are the proper isotopy classes of compressing disks for $\partial
H_{g}$ and of properly imbedded boundary- parallel incompressible annuli and
pairs of pants in $H_{g}.$ For a vertex $\left[  S\right]  $ which is not a
class of compressing disks, it is also required that $S$ is isotopic to a
surface $\overline{S}$ embedded in $\partial H_{g}$ via an isotopy
\[
F:S\times \left[  0,1\right]  \rightarrow H_{g}%
\]
with $F\left(  S\times \left \{  0\right \}  \right)  =S,$ $F\left(
S\times \left \{  1\right \}  \right)  =\overline{S}$ and $F$ being proper when
restricted to $\left[  0,1\right)  .$ A collection of vertices spans a simplex
in $\mathcal{I}\left(  H_{g}\right)  $ when any two of them may be represented
by disjoint surfaces in $H_{g}.$
\end{definition}

\noindent Note that the class of properly embedded incompressible surfaces in
$H_{g}$ is very rich. For example, it contains surfaces of arbitrarily high
genus (see \cite{Qiu}, \cite{CPT}) which are not included as vertices in the
complex $\mathcal{I}\left(  H_{g}\right)  $ defined above. Also observe that
there exist properly embedded annuli and pairs of pants which are not isotopic
to a surface entirely contained in $\partial H_{g}.$ The isotopy classes of
such surfaces are also excluded from the vertex set of $\mathcal{I}\left(
H_{g}\right)  .$

Note that we may regard $\mathcal{D}\left(  H_{g}\right)  $ as a subcomplex of
$\mathcal{I}\left(  H_{g}\right)  $ or, by taking boundaries of the
representative disks, of $\mathcal{C}\left(  \partial H_{g}\right)  .$ Note
also that the vertices of $\mathcal{I}\left(  H_{g}\right)  $ represented by
annuli correspond exactly to the vertices of $\mathcal{C}\left(  \partial
H_{g}\right)  $ represented by curves that are essential in $\partial H_{g}$
but are not meridian boundaries. We define the complex of annuli
$\mathcal{A}\left(  H_{g}\right)  $ to be the subcomplex of $\mathcal{I}%
\left(  H_{g}\right)  $ spanned by these vertices. Together, the vertices of
$\mathcal{D}\left(  H_{g}\right)  \cup \mathcal{A}\left(  H_{g}\right)  $ span
a copy of $\mathcal{C}\left(  \partial H_{g}\right)  $ in $\mathcal{I}\left(
H_{g}\right)  ,$ and we regard $\mathcal{C}\left(  \partial H_{g}\right)  $ as
a subcomplex of $\mathcal{I}\left(  H_{g}\right)  .$

Our goal is to show that for a handlebody $H_{g}$ of genus $g\geq2$ the
automorphisms of the complex $\mathcal{I}\left(  H_{g}\right)  $ are all
geometric, that is, they are induced by homeomorphisms of $H_{g}.$ This can be
rephrased by saying that the map
\[
A:\mathcal{MCG}\left(  H_{g}\right)  \rightarrow Aut\left(  \mathcal{I}\left(
H_{g}\right)  \right)
\]
is an onto map, where $Aut\left(  \mathcal{I}\left(  H_{g}\right)  \right)  $
is the group of automorphisms of the complex $\mathcal{I}\left(  H_{g}\right)
$ and $\mathcal{MCG}\left(  H_{g}\right)  $ is the (extended) mapping class
group of $H_{g},$ i.e. the group of isotopy classes of self-homeomorphisms of
$H_{g}.$ Moreover, we will show (see Theorem \ref{main_theorem} below) that
the map $A$ is 1-1 except when $H_{g}$ is the handlebody of genus $2$ in which
case a $\mathbb{Z}_{2}$ kernel is present generated by the hyper-elliptic involution.

\begin{figure}[ptb]
\begin{center}
\includegraphics[scale=1.03]
{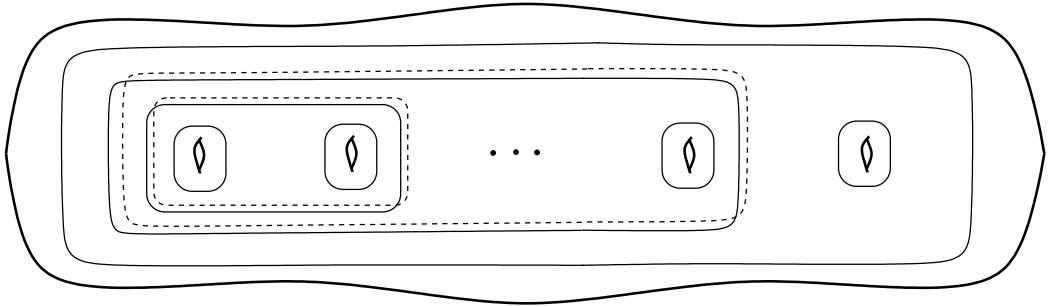}
\end{center}
\caption{Pants decomposition for $H_{g}$ consisting of non-separating,
non-meridian curves, $g\geq3.$}%
\label{non_meridian_pants}%
\end{figure}

For the proof of this result we perform a close examination of links of
vertices in $\mathcal{I}\left(  H_{g}\right)  .$ This examination establishes
that an automorphism $f$ of $\mathcal{I}\left(  H_{g}\right)  $ must map each
vertex $v$ in $\mathcal{I}\left(  H_{g}\right)  $ to a vertex $f\left(
v\right)  $ consisting of surfaces of the same topological type as those in
$v.$ In particular, $f$ induces an automorphism of the subcomplex
$\mathcal{C}\left(  \partial H_{g}\right)  $ which permits the use of the
corresponding result for surfaces (see \cite{I3}, \cite{Luo}).

It is a well known result that for genus $\geq2$ the complex of curves
$\mathcal{C}\left(  \partial H_{g}\right)  $ is a $\delta-$hyperbolic metric
space in the sense of Gromov (see \cite{M-M2},\cite{B}). In the last section
we deduce that the complex $\mathcal{I}\left(  M\right)  $ is itself a
$\delta-$hyperbolic metric space in the sense of Gromov. Moreover, it follows
that $Aut\left(  \mathcal{I}\left(  H_{g}\right)  \right)  $ does not contain
parabolic elements and the hyperbolic isometries of $\mathcal{I}\left(
M\right)  $ correspond to pseudo-Anosov elements of $\mathcal{MCG}\left(
H_{g}\right)  .$

In a recent preprint of M. Korkmaz and S. Schleimer (see \cite{K-S}), it was
shown, in a more general context, that $\mathcal{MCG}\left(  H_{g}\right)  $
and $Aut\left(  \mathcal{D}\left(  H_{g}\right)  \right)  $ are isomorphic.
Apart from this isomorphism, our motivation for constructing the copmplex
$\mathcal{I}\left(  H_{g}\right)  $ is the study of the mapping class group of
a Heegaard splitting in a $3-$manifold $M.$ This group (originally defined for
$\mathbb{S}^{3}$ and often called the Goeritz mapping class group) consists of
the isotopy classes of orientation preserving homeomorphisms of $M$ that
preserve the Heegaard splitting. The mapping class group of a Heegaard
splitting is known to be finitely presented (see \cite{Akb}, \cite{Cho},
\cite{Sch}) only for $M=\mathbb{S}^{3}$ and for a genus $2$ Heegaard
splitting. We aim to examine the corresponding open questions for
$M=\mathbb{S}^{3}$ and Heegaard splittings of genus $\geq3$ as well as for
certain classes of hyperbolic 3-manifolds. For these purposes, the complex
$\mathcal{I}\left(  H_{g}\right)  $ is a suitable building block for defining
a complex encoding the complexity of the Goeritz mapping class group, because
$\mathcal{I}\left(  H_{g}\right)  $ contains a copy of the curve complex of
the boundary surface $\partial H_{g}.$

\subsection{Notation and terminology}

A $3-$dimensional handlebody $H_{g}$ of genus $g$ can be represented as the
union of a handle of index $0$ (i.e. a $3-$ball) with $g$ handles of index $1$
(i.e. $g$ copies of $D^{2}\times \left[  0,1\right]  $).

For an essential simple closed curve $\alpha$ in $\partial H_{g}$ we will be
writing $\left[  \alpha \right]  $ for its isotopy class and the corresponding
vertex in $\mathcal{C}\left(  \partial H_{g}\right)  .$ We will be writing
$\left[  S_{\alpha}\right]  $ for the corresponding vertex in $\mathcal{A}%
\left(  H_{g}\right)  $ where $S_{\alpha}$ is the annulus corresponding to the
curve $\alpha,$ provided that $\alpha$ is not a meridian boundary. We will be
saying that $\left[  S_{\alpha}\right]  $ is an \emph{annular vertex}. If
$\alpha$ is a meridian boundary we will be writing $\left[  D_{\alpha}\right]
$ for the corresponding vertex in $\mathcal{D}\left(  H_{g}\right)  .$ We will
be saying that $\left[  D_{\alpha}\right]  $ is a \emph{meridian vertex} and
$\alpha$ a meridian curve. A vertex in $\mathcal{I}\left(  H_{g}\right)
\setminus \left(  \mathcal{D}\left(  H_{g}\right)  \cup \mathcal{A}\left(
H_{g}\right)  \right)  $ will be called a \emph{pants vertex}.

\begin{figure}[ptb]
\begin{center}
\includegraphics[scale=0.95]
{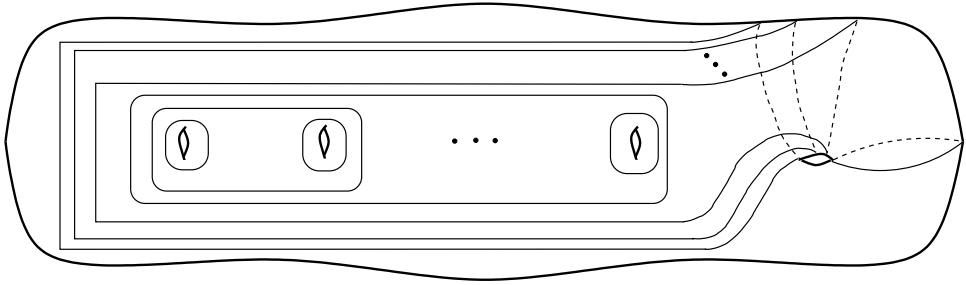}
\end{center}
\caption{Pants decomposition for $H_{g}$ consisting of a single non-separating
meridian curve, and $3g-4$ non-meridian curves, $g\geq3.$}%
\label{meridian_pants}%
\end{figure}\noindent

By writing $\left[  \alpha \right]  \cap \left[  \beta \right]  =\varnothing$ for
non-isotopic curves $\alpha,\beta$ we mean that there exist curves
$\alpha^{\prime}\in \left[  \alpha \right]  $ and $\beta^{\prime}\in \left[
\beta \right]  $ such that $\alpha^{\prime}\cap \beta^{\prime}=\varnothing.$ By
writing $\left[  \alpha \right]  \cap \left[  \beta \right]  \neq \varnothing$ we
mean that for any $\alpha^{\prime}\in \left[  \alpha \right]  $ and
$\beta^{\prime}\in \left[  \beta \right]  ,$ $\alpha^{\prime}\cap \beta^{\prime
}\neq \varnothing.$ By saying that the class $\left[  \alpha \right]  $
intersects the class $\left[  \beta \right]  $ at one point we mean that, in
addition to $\left[  \alpha \right]  \cap \left[  \beta \right]  \neq
\varnothing,$ there exist curves $\alpha^{\prime}\in \left[  a\right]  $ and
$\beta^{\prime}\in \left[  \beta \right]  $ which intersect at exactly one point.

The above notation with square brackets will be similarly used for surfaces.
If $S$ is an incompressible surface we will denote by $Lk\left(  \left[
S\right]  \right)  $ the link of the vertex $\left[  S\right]  $ in
$\mathcal{I}\left(  H_{g}\right)  ,$ namely, for each simplex $\sigma$
containing $\left[  S\right]  $ consider the faces of $\sigma$ not containing
$\left[  S\right]  $ and take the union over all such $\sigma.$ We will use
the notation $\ncong$ to declare that two links are not isomorphic as complexes.

We will also use the classical notation $\Sigma_{n,b}$ to denote a surface of
genus $n$ with $b$ boundary components.

\section{Properties of the complex $\mathcal{I}\left(  H_{g}\right)  $
\label{invariance}}

\bigskip In this section we will show that every automorphism of
$\mathcal{I}\left(  M\right)  $ must preserve the subcomplexes $\mathcal{A}%
\left(  H_{g}\right)  $ and $\mathcal{D}\left(  H_{g}\right)  .$ In
particular, we will show that for $\left[  S\right]  \in \mathcal{I}\left(
H_{g}\right)  ,$ the topological type of the surface $S$ determines the link
of $\left[  S\right]  $ in $\mathcal{I}\left(  H_{g}\right)  $ and vice-versa.
To do this we will find topological properties for the link of each
topological type of surfaces (meridians, annuli and pairs of pants) that
distinguish their links.

It is well known that a pants decomposition for $\partial H_{g}$ is a
collection $\alpha_{1},\ldots,\alpha_{3g-3}$ of $3g-3$ essential,
non-parallel, simple closed curves such that the closure of each component of
the complement of these curves is a pair of pants. The number of pairs of
pants is $2g-2.$ Thus, the maximal number of vertices in a simplex of
$\mathcal{I}\left(  H_{g}\right)  $ is $5g-5.$ In other words the dimension of
$\mathcal{I}\left(  H_{g}\right)  $ is $\leq5g-6.$ To see that simplices of
dimension $5g-6$ actually exist, observe that there exists a pants
decomposition $\alpha_{1},\ldots,\alpha_{3g-3}$ so that each $\alpha_{i}$ is a
non-separating, non-meridian curve for all $i.$ This is displayed in Figure
\ref{non_meridian_pants} for $g\geq3$ and for $g=2$ see Remark \ref{r5} below.
For such a choice of $\alpha_{i}$'s, all $2g-2$ pairs of pants formed by
$\alpha_{1},\ldots,\alpha_{3g-3}$ are incompressible surfaces. Apparently, all
such pairs of pants give rise to distinct elements in $\mathcal{I}\left(
H_{g}\right)  .$ Thus, a pants decomposition $\alpha_{1},\ldots,\alpha_{3g-3}$
with all $\alpha_{i}$'s being non-meridian curves gives rise to $3g-3$ annular
surfaces $S_{\alpha_{1}},\ldots,S_{\alpha_{3g-3}}.$ These surfaces along with
the $2g-2$ pairs of pants formed by $\alpha_{1},\ldots,\alpha_{3g-3}$ give
rise to a simplex in $\mathcal{I}\left(  H_{g}\right)  $ containing $5g-5$
vertices. We have established the following

\begin{proposition}
The dimension of the complex $\mathcal{I}\left(  H_{g}\right)  $ is $5g-6.$
\end{proposition}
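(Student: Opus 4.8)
The plan is to compute the dimension as one less than the maximal number of vertices spanning a simplex, so it suffices to prove that every simplex has at most $5g-5$ vertices; the reverse inequality is already witnessed by the explicit simplex constructed just above, built from a non-meridian pants decomposition $\alpha_{1},\ldots,\alpha_{3g-3}$ together with its $2g-2$ complementary pairs of pants. Thus the entire content lies in the upper bound, and I would phrase the goal as: for any simplex $\sigma$ with $d$ disk vertices, $a$ annular vertices and $p$ pants vertices, one has $d+a+p\leq 5g-5$.

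First I would fix pairwise disjoint representatives for the vertices of $\sigma$ and, using the defining property of $\mathcal{I}\left(H_{g}\right)$, isotope the annular and pants representatives into $\partial H_{g}$ while simultaneously arranging all representatives to have pairwise disjoint boundaries on $\partial H_{g}$; the disks keep their meridian boundary curves, whereas the annuli and pants now lie inside $\partial H_{g}$. Collecting all of these boundary curves and discarding isotopic duplicates yields a system $\Gamma$ of pairwise disjoint, pairwise non-isotopic essential simple closed curves in $\partial H_{g}$, and since $\partial H_{g}$ has genus $g$ such a system satisfies $\left|\Gamma\right|\leq 3g-3$.

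Next I would estimate the two contributions separately. Distinct disk vertices have non-isotopic meridian boundaries (a meridian curve bounds a unique disk up to isotopy) and distinct annular vertices have non-isotopic non-meridian boundaries, and a meridian is never isotopic to a non-meridian curve; hence the $d+a$ disk and annular vertices inject into $\Gamma$, giving $d+a\leq\left|\Gamma\right|\leq 3g-3$. For the pants I would cut $\partial H_{g}$ along $\Gamma$: because the curves are essential and pairwise non-isotopic, each complementary region $R$ has $\chi\left(R\right)\leq -1$, and from $\sum_{R}\chi\left(R\right)=\chi\left(\partial H_{g}\right)=2-2g$ it follows that the number of complementary regions is at most $2g-2$. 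Each pants representative is a connected subsurface of $\partial H_{g}$ with $\chi=-1$, with boundary in $\Gamma$ and interior disjoint from $\Gamma$, so it coincides with a single complementary pair of pants, and distinct pants vertices give distinct regions. Therefore $p\leq 2g-2$, and adding the two estimates yields $d+a+p\leq\left(3g-3\right)+\left(2g-2\right)=5g-5$.

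The step I expect to be the main obstacle is the simultaneous normalization: the definition guarantees only that each non-disk vertex is \emph{separately} isotopic into $\partial H_{g}$, so I would need to justify that pairwise disjoint boundary-parallel surfaces can be pushed into $\partial H_{g}$ all at once while remaining disjoint, for instance by an innermost/outermost argument applied to the product regions realizing each boundary-parallelism. A closely related delicate point is ensuring the bijection between pants vertices and complementary pants regions really holds, that is, ruling out that two non-isotopic pants representatives become parallel to the same region and treating carefully the cases in which some of the three boundary curves of a pants are isotopic to one another in $\partial H_{g}$. Once these normalizations are in place the Euler characteristic bookkeeping is routine.
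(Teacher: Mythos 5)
Your proof is correct and follows essentially the same route as the paper: the lower bound is the same explicit $5g-5$ vertex simplex built from a non-meridian pants decomposition, and the upper bound is the count of $3g-3$ curves plus $2g-2$ complementary pieces, which the paper simply asserts as the well-known maximum for a curve system on $\partial H_{g}$. Your Euler-characteristic bookkeeping supplies details the paper leaves implicit; the normalization issues you flag are real but routine, and in fact you only need simultaneous disjointness of the \emph{boundary curves} (automatic for curve systems on a surface once they are pairwise disjoint up to isotopy) together with the fact that each boundary-parallel pair of pants is parallel to a single complementary region, not a simultaneous disjoint realization of the surfaces themselves.
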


We next examine the dimension of $Lk\left(  \left[  D\right]  \right)  $ when
$D$ is a meridian and of $Lk\left(  \left[  S_{\alpha}\right]  \right)  $ when
$S_{\alpha}$ is an annular surface. \begin{figure}[ptb]
\begin{center}
\includegraphics[scale=1.03]{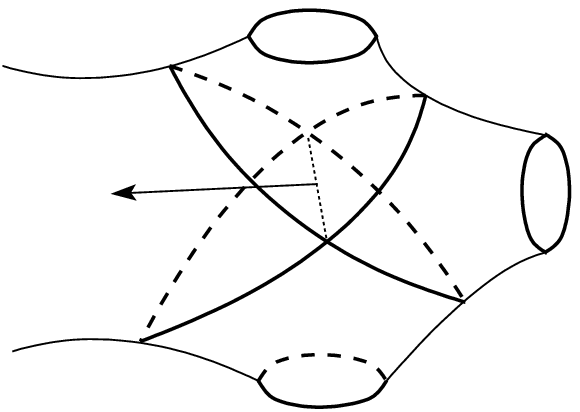}
\end{center}
\par
\begin{picture}(22,12)
\put(102,88){$D_{\beta_{i}} \cap D_{\beta^{\prime}_i}$}
\put(152,35){$\beta_{i}^{\prime}$}
\put(160,137){$\beta_i$}
\put(233,27){$\alpha_3$}
\put(293,88){$\alpha_1$}
\put(233,140){$\alpha_2$}
\end{picture}
\caption{$\,$}%
\label{fig02}%
\end{figure}

\begin{lemma}
\label{dim1}If $S_{\alpha}$ is an annular (incompressible) surface then the
link of the vertex $\left[  S_{\alpha}\right]  $ in $\mathcal{I}\left(
H_{g}\right)  $ has dimension $5g-7.$
\end{lemma}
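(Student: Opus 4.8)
The upper bound is immediate from the preceding Proposition: since $\dim\mathcal{I}\left(H_{g}\right)=5g-6$, every simplex of $\mathcal{I}\left(H_{g}\right)$ has at most $5g-5$ vertices, so any simplex containing $\left[S_{\alpha}\right]$ contributes to $Lk\left(\left[S_{\alpha}\right]\right)$ a face with at most $5g-6$ vertices, i.e. of dimension at most $5g-7$. It therefore remains to produce a single simplex of dimension $5g-6$ (equivalently, with $5g-5$ vertices) containing $\left[S_{\alpha}\right]$; its face opposite $\left[S_{\alpha}\right]$ is then a $\left(5g-7\right)$-simplex in $Lk\left(\left[S_{\alpha}\right]\right)$, giving the lower bound.

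To build such a maximal simplex, the plan is to extend the curve $\alpha$ to a pants decomposition $\alpha=\alpha_{1},\alpha_{2},\ldots,\alpha_{3g-3}$ of $\partial H_{g}$ all of whose curves are non-meridian. Granting this, each $\alpha_{i}$ yields an annular vertex $\left[S_{\alpha_{i}}\right]$, and since the three boundary curves of every complementary pair of pants are non-meridian, each of the $2g-2$ pairs of pants is incompressible and yields a pants vertex. Realizing the annuli as thin neighborhoods of the $\alpha_{i}$ pushed into $H_{g}$ and the pants as the complementary regions pushed in, all $5g-5$ surfaces are pairwise disjoint and span a $\left(5g-6\right)$-simplex containing $\left[S_{\alpha}\right]$, exactly as in the proof of the preceding Proposition.

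The substance of the argument, and the step I expect to be the main obstacle, is the existence of a non-meridian pants decomposition extending $\alpha$. I would construct it inductively, maintaining a collection of pairwise disjoint, non-isotopic, non-meridian curves, starting from $\alpha_{1}=\alpha$ (non-meridian by hypothesis). As long as fewer than $3g-3$ curves have been chosen, some component $Q$ of the complement is not a pair of pants, and one must exhibit an essential non-peripheral curve in $Q$ that is non-meridian; note that the boundary curves of $Q$ are among the already-chosen $\alpha_{i}$, hence non-meridian. If $Q$ has positive genus, take two curves meeting transversely in a single point inside a one-holed-torus subsurface of $Q$; a curve meeting a meridian once has algebraic intersection $\pm1$ with the corresponding meridian disk, hence is homologically nontrivial in $H_{g}$ and so non-meridian, so these two curves cannot both be meridians. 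If $Q$ is planar it contains a four-holed sphere $\Sigma_{0,4}$, and I would invoke the elementary fact that four nontrivial elements $x_{1},x_{2},x_{3},x_{4}$ of the free group $\pi_{1}\left(H_{g}\right)$ with $x_{1}x_{2}x_{3}x_{4}=1$ cannot make all three essential curves of $\Sigma_{0,4}$ trivial (if two of the pairing products vanish, the third is conjugate to $x_{1}^{2}\neq1$); hence one of these curves is non-meridian. Adding such a curve preserves the non-meridian invariant, and the process terminates at a pants decomposition with $3g-3$ curves. Combined with the upper bound this gives $\dim Lk\left(\left[S_{\alpha}\right]\right)=5g-7$. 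The genus-$2$ case is handled in the same way, separating curves now being admissible in the decomposition; cf. Remark \ref{r5}.
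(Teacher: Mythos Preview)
Your overall strategy matches the paper's: produce a pants decomposition of $\partial H_{g}$ containing $\alpha$ and consisting entirely of non-meridian curves, so that all $2g-2$ complementary pairs of pants are incompressible and the resulting $(5g-6)$-simplex contains $\left[S_{\alpha}\right]$. Where you differ is in how this decomposition is built. The paper splits into the cases $\alpha$ separating and $\alpha$ non-separating, and in the main (separating) case shows that each complementary piece $\Sigma_{n,1}$ admits a non-meridian pants decomposition: first cut down the genus by non-meridian non-separating curves (using that a pair of pants with one non-meridian boundary cannot have both other boundaries meridian), and then handle the resulting planar surface by a geometric argument---if two candidate curves $\beta_{1},\beta_{1}'$ both bounded disks, the arc $D_{\beta_{1}}\cap D_{\beta_{1}'}$ lets one reassemble half-disks to cap off one of the $\alpha_{i}$, a contradiction.

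Your inductive scheme is more direct, and your homological argument in the positive-genus case (two curves meeting once cannot both be meridians) is correct and pleasant. The planar step, however, needs a small repair. You assert that a planar $Q$ ``contains a four-holed sphere $\Sigma_{0,4}$'' and then run the free-group argument there; but when $Q=\Sigma_{0,b}$ with $b>4$, any embedded $\Sigma_{0,4}\subset Q$ has at least one boundary component that is an interior curve of $Q$, and that curve may very well be a meridian---so the hypothesis ``all four $x_{i}$ nontrivial'' is not guaranteed. The fix is immediate: run the algebra directly in $Q$ using three curves that cut off the pairs $\{1,2\}$, $\{2,3\}$, $\{1,3\}$ of boundary components. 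If the first two are meridians then $x_{2}=x_{1}^{-1}$ and $x_{3}=x_{1}$, and the third curve represents a conjugate of $x_{1}x_{2}x_{3}x_{2}^{-1}=x_{1}^{2}\neq 1$. Hence one of these three interior curves of $Q$ is non-meridian and the induction proceeds. With this adjustment your argument is complete, and the free-group computation is arguably more transparent than the paper's disk-intersection trick, at the cost of a slightly less self-contained topological picture.
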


\begin{proof}
We first assume that $\alpha$ is a separating curve. Then $\alpha$ decomposes
$\partial H_{g}$ into surfaces $\Sigma_{n,1}$ and $\Sigma_{m,1}$ with $m+n=g$
and $m,n\geq1$ with $\alpha$ being isotopic to the boundary of $\Sigma_{n,1}$
as well as to the boundary of $\Sigma_{m,1}.$ To complete the proof in this
case, it suffices to find a pants decomposition for $\partial H_{g}$
consisting of non-meridian curves and containing the curve $\alpha.$ For the
latter, it suffices to show the following

\begin{description}
\item[Claim] $\Sigma_{n,1}$ can be decomposed into $2n-1$ pairs of pants so
that the boundary curves of each are non-meridian when viewed as curves in
$\partial H_{g}.$
\end{description}

The first step is to find pair-wise disjoint non-separating curves $\alpha
_{1},,\ldots,\alpha_{n}$ in $\Sigma_{n,1}$ such that $\alpha_{i}$ does not
bound a disk in $H_{g}\ $for all $i.$ To see this, let $\alpha_{1},\alpha
_{1}^{\prime}$ be two simple non-separating curves in $\partial H_{g}$ such
that the curves $\alpha,\alpha_{1},\alpha_{1}^{\prime}$ bound a pair of pants
in $\partial H_{g}.$ As $\alpha$ is not the boundary of a meridian in $H_{g},$
it is clear that $\alpha_{1},\alpha_{1}^{\prime}$ cannot both be meridian
boundaries in $H_{g}.$ Assuming $\alpha_{1}$ is not meridian boundary, we may
cut $\Sigma_{n,1}$ along $\alpha_{1}$ to obtain a surface $\Sigma_{n-1,3}.$ By
the same argument, we may find a non-separating curve $\alpha_{i}$ in
$\Sigma_{n-(i-1), 2i-1},$ $i=2,\ldots,n$ which is not meridian boundary.

Apparently, cutting $\Sigma_{n,1}$ along $\alpha_{1},,\ldots,\alpha_{n}$ we
obtain a sphere $\Sigma_{0,1+2n}$ with $1+2n$ holes, such that the boundary
components of $\Sigma_{0,1+2n}$ do not bound disks when viewed as curves in
$\partial H_{g}.$ We now claim that we may find pair-wise disjoint curves
$\beta_{1},\ldots,\beta_{2n-2}$ such that $\beta_{j}$ does not bound a disk in
$H_{g}\ $for all $j=1,\ldots,2n-2.$ To see this, let $\beta_{1},\beta
_{1}^{\prime}$ be two simple closed curves in $\Sigma_{0,1+2n}$ such that the
curves $\alpha_{1},\alpha_{2},\beta_{1}$ bound a pair of pants and the curves
$\alpha_{1},\alpha_{3},\beta_{1}^{\prime}$ bound a pair of pants as shown in
Figure 3. If both $\beta_{1},\beta_{1}^{\prime}$ bound properly embedded disks
in $H_{g},$ say $D_{\beta_{1}},D_{\beta_{1}^{\prime}}$ respectively, then
$D_{\beta_{1}}\cap D_{\beta_{1}^{\prime}}$ is a properly embedded arc in
$H_{g}$ which separates $D_{\beta_{1}}$ into two half-disks. Similarly for
$D_{\beta_{1}^{\prime}}.$ Appropriate unions of these half-disks along
$D_{\beta_{1}}\cap D_{\beta_{1}^{\prime}}$ establish a contradiction since
none of $\alpha_{1},\alpha_{2},\alpha_{3}$ is a meridian boundary. Thus, at
least one of $\beta_{1},\beta_{1}^{\prime},$ say $\beta_{1},$ does not bound a
disk. Cutting $\Sigma_{0,1+2n}$ along $\beta_{1}$ we obtain a pair of pants
and a surface $\Sigma_{0,1+2n-1}$ which has the same property as
$\Sigma_{0,1+2n},$ namely, all boundary components of $\Sigma_{0,1+2n-1}$ do
not bound disks when viewed as curves in $\partial H_{g}.$ By applying the
same argument repeatedly, we may find the desired collection of curves
$\beta_{1},\ldots,\beta_{2n-2}$ none of which is a meridian boundary.
Apparently, the collection of curves $\beta_{1},\ldots,\beta_{2n-2}$
decomposes $\Sigma_{0,1+2n}$ into $2n-1$ pairs of pants as required. This
completes the proof of the Claim and the proof of the lemma in the case
$\alpha$ is separating.

Assume now that $\alpha$ is non-separating. Using two copies of $\alpha$ and a
simple arc joining them we may construct a separating curve $\beta$ which
decomposes $\partial H_{g}$ into surfaces $\Sigma_{g-1,1}$ and $\Sigma_{1,1}$
with $\beta$ being isotopic to the boundary of $\Sigma_{g-1,1}$ as well as to
the boundary of $\Sigma_{1,1}.$ Note that $\Sigma_{1,1}$ contains $\alpha.$
Then by the above claim we have that $\Sigma_{g-1,1}$ can be decomposed into
$2\left(  g-1\right)  -1$ (incompressible) pairs of pants by using
non-meridian curves $\alpha_{i},$ $i=1,\ldots,3g-5$ contained in
$\Sigma_{g-1,1}$ together with the cirve $\beta.$ By adding the curve $\alpha$
we obtain a pants decomposition $\alpha_{1},\ldots,\alpha_{3g-5},\beta,\alpha$
with all curves being non-meridian. Hence, $\left[  S_{\alpha}\right]  $ is
contained in a simplex of maximum dimension, namely, of dimension $5g-6$ which
shows that the dimension of $Lk\left(  \left[  S_{\alpha}\right]  \right)  $
is $5g-7.$
\end{proof}

\begin{lemma}
\label{dim2}If $D$ is a meridian then the link of the vertex $\left[
D\right]  $ in $\mathcal{I}\left(  H_{g}\right)  $ has dimension $5g-9.$
\end{lemma}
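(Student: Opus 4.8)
The plan is to mirror the structure of Lemma \ref{dim1}, but now accounting for the extra constraint imposed by the meridian disk $D$. The goal is to find the maximal-dimensional simplex in $\mathcal{I}\left( H_{g}\right)$ containing $\left[ D\right]$, and then subtract one to obtain the dimension of the link. The key difference from the annular case is that a meridian disk $D$ cuts $H_{g}$ along $\partial D$, and this affects which curves of a pants decomposition can be realized as \emph{non-meridian} (and hence give rise to the extra annular and pants vertices that inflate the dimension of the simplex).

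First I would consider the two cases according to whether $\partial D$ is separating or non-separating in $\partial H_{g}$, as was done for the annulus. When $\partial D$ is non-separating, cutting $H_{g}$ along $D$ yields a handlebody $H_{g-1}$. The strategy is to build a pants decomposition $\alpha_{1},\ldots,\alpha_{3g-3}$ of $\partial H_{g}$ containing $\partial D$ such that as many of the remaining curves as possible are non-meridian, invoking the Claim from the previous lemma to maximize the non-meridian count. Each such non-meridian curve contributes an annular vertex, and each pair of pants \emph{all of whose boundary curves are non-meridian} contributes a pants vertex. The crucial observation is that the two pairs of pants adjacent to the meridian curve $\partial D$ (i.e.\ the two pairs of pants having $\partial D$ on their boundary) \emph{cannot} be incompressible pants vertices, because they have a meridian boundary curve; hence they fail the incompressibility/boundary-parallel requirement in the definition of $\mathcal{I}\left( H_{g}\right)$. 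This is exactly the source of the dimension drop from $5g-6$: we lose the annular vertex $\left[ S_{\partial D}\right]$ (since $\partial D$ is a meridian) and we lose the pants vertices adjacent to $\partial D$.

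The main bookkeeping step is therefore to count precisely how many vertices survive. In a pants decomposition, a non-separating curve such as $\partial D$ lies on the boundary of two distinct pairs of pants; both are disqualified as pants vertices, removing $2$ from the pants count relative to the $2g-2$ maximum. We also forfeit the annular vertex for $\partial D$ itself. Starting from the maximal simplex of $5g-5$ vertices (comprising $3g-3$ annular plus $2g-2$ pants vertices when all curves are non-meridian), replacing one non-meridian curve by the meridian $\partial D$ removes one annular vertex and two pants vertices, but adds back the meridian vertex $\left[ D\right]$ itself, for a net simplex size of $\left( 5g-5\right) -1-2+1 = 5g-7$ vertices, i.e.\ dimension $5g-8$. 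To reach the claimed $5g-9$, I expect one must verify that no alternative decomposition recovers a lost pants vertex, and that the two lost pants are genuinely distinct and genuinely non-incompressible; getting this count exactly right — confirming it is $5g-9$ and not $5g-8$ — is the part I would scrutinize most carefully.

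The hard part will be establishing that this configuration is genuinely \emph{optimal}, that is, that no pants decomposition or non-decomposition collection of disjoint surfaces compatible with $D$ produces a larger simplex. One must rule out the possibility of cleverly arranging the remaining curves so that fewer than two adjacent pants become compressible, and one must confirm that using a boundary-parallel pants or annulus not arising from the chosen pants decomposition cannot compensate. I would handle the separating case of $\partial D$ analogously, using the decomposition of $\partial H_{g}$ into $\Sigma_{n,1}$ and $\Sigma_{m,1}$ and again identifying the pants adjacent to the meridian curve as the disqualified vertices, checking that the final count matches $5g-9$ in both the separating and non-separating subcases so that the dimension of $Lk\left( \left[ D\right] \right)$ is uniformly $5g-9$.
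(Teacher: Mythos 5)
Your approach is essentially the paper's: exhibit a pants decomposition $\alpha_{1},\ldots,\alpha_{3g-4},\partial D$ with all $\alpha_{i}$ non-meridian, observe that exactly the two pairs of pants meeting $\partial D$ are compressible and hence disqualified, and then argue no decomposition containing $\partial D$ does better. The discrepancy you flag between $5g-8$ and $5g-9$ is not real: your count of $5g-7$ vertices (dimension $5g-8$) is for the maximal \emph{simplex} containing $\left[ D\right]$, whereas the lemma asserts the dimension of the \emph{link}, which has one vertex fewer (the vertex $\left[ D\right]$ itself is removed) and hence dimension $5g-9$ --- exactly as in Lemma \ref{dim1}, where a maximal simplex of dimension $5g-6$ yields a link of dimension $5g-7$. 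As for the optimality step you rightly single out, the paper's argument is short: in any pants decomposition containing $\partial D$, either two distinct pairs of pants have $\partial D$ as a boundary component, or a single pair of pants has two boundary components isotopic to $\partial D$, in which case its third boundary component is again a meridian and forces a second, distinct compressible pair of pants; either way at least two pants vertices are always lost, so the bound $5g-7$ on the number of vertices is sharp.
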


\begin{proof}
First assume that $\left[  D\right]  $ is non-separating. We may find a pants
decomposition $\alpha_{1},\ldots,\alpha_{3g-4},\alpha_{3g-3}=\partial D$ for
$\partial H_{g}$ such that $\alpha_{i}$ is non-meridian for all $i=1,\ldots
,3g-4$ (see Figure \ref{meridian_pants}). This collection of curves decomposes
$\partial H_{g}$ into $2g-2$ pairs of pants such that exactly two of these
have $\partial D$ as boundary component and, hence, they are compressible
surfaces. Thus, a non-separating meridian $\left[  D\right]  $ is contained in
a simplex with $3g-3+2g-4$ vertices and, hence, the dimension of $Lk\left(
\left[  D\right]  \right)  $ is $\geq5g-9.$ Let now $\alpha_{1}^{\prime
},\ldots,\alpha_{3g-4}^{\prime},\alpha_{3g-3}=\partial D$ be any pants
decomposition with corresponding pairs of pants $P_{1},\ldots,P_{2g-2}$ such
that one of them, say $P_{1},$ has two boundary components isotopic to
$\partial D.$ Then the third boundary component of $P_{1}$ will also be a
meridian, thus, another pair of pants distinct from $P_{1}$ will also be
compressible. This shows that a class $\left[  D\right]  $ with $D$
non-separating meridian cannot be contained in a simplex of more that $5g-7$
vertices and, thus, $Lk\left(  \left[  D\right]  \right)  $ is equal to
$5g-9.$

If $\left[  D\right]  $ is separating, it is clear that any decomposition
$\alpha_{1},\ldots,\alpha_{3g-4},\alpha_{3g-3}=\partial D$ for $\partial
H_{g}$ with $\alpha_{i}$ being non-meridian for all $i=1,\ldots,3g-4$ has the
property that exactly two of the corresponding pairs of pants are compressible
and we work similarly.
\end{proof}

\begin{proposition}
\label{non_iso}Let $\left[  D\right]  $ be a meridian vertex, $\left[
S_{\alpha}\right]  $ an annular vertex and $\left[  P\right]  $ a pants
vertex. Then the links $Lk\left(  \left[  D\right]  \right)  ,Lk\left(
\left[  S_{\alpha}\right]  \right)  $ and $Lk\left(  \left[  P\right]
\right)  $ are pair-wise non-isomorphic as complexes.
\end{proposition}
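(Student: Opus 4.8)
The plan is to prove Proposition \ref{non_iso} by exhibiting a topological invariant of the three links that takes distinct values on the three types of vertices, so that no simplicial isomorphism can carry one link to another. The most natural candidate is the dimension of the link, for which the preceding two lemmas already do most of the work: Lemma \ref{dim1} gives $\dim Lk\left(\left[S_{\alpha}\right]\right)=5g-7$ and Lemma \ref{dim2} gives $\dim Lk\left(\left[D\right]\right)=5g-9$. Since $5g-7\neq 5g-9$, the annular and meridian links are immediately non-isomorphic, and this disposes of the comparison $Lk\left(\left[S_{\alpha}\right]\right)\ncong Lk\left(\left[D\right]\right)$ in one line.

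The remaining task, which I expect to be the crux, is to separate the pants link $Lk\left(\left[P\right]\right)$ from the other two. First I would compute the dimension of $Lk\left(\left[P\right]\right)$. A pants vertex $\left[P\right]$ is a boundary-parallel pair of pants, so its three boundary curves form part of a pants decomposition of $\partial H_g$; by the same construction used in Lemma \ref{dim1} one can complete these three curves to a pants decomposition by non-meridian curves, realizing $\left[P\right]$ inside a top-dimensional simplex. The key combinatorial point is that a boundary-parallel pair of pants $P$ and the pants vertex of the \emph{complementary} region sharing its three boundary annuli are disjoint from $P$, whereas the annulus-plus-pants bookkeeping differs from the annular case. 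I expect $\dim Lk\left(\left[P\right]\right)=5g-7$, matching the annular dimension, so dimension alone will \emph{not} distinguish pants from annuli; this is the main obstacle.

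To break the tie between $Lk\left(\left[P\right]\right)$ and $Lk\left(\left[S_{\alpha}\right]\right)$ I would pass to a finer invariant: the local structure of the top-dimensional simplices containing the vertex, or equivalently a count of how many maximal simplices the link meets and how its vertices of each type (meridian, annular, pants) are distributed. The geometric distinction to exploit is that a single annular vertex $\left[S_{\alpha}\right]$ sits on the curve $\alpha$, which can be extended inside either complementary subsurface in many ways and cuts off exactly two adjacent pairs of pants, while a pants vertex $\left[P\right]$ pins down an entire pair of pants, hence all three of its boundary curves simultaneously, leaving strictly less freedom in the complement. Concretely I would examine, for each type of vertex $v$, the subcomplex of $Lk(v)$ spanned by the pants vertices it contains and compare its dimension or its number of connected components; because fixing a pair of pants rigidifies three boundary curves at once, this pants-subcomplex will have a strictly smaller dimension in $Lk\left(\left[P\right]\right)$ than in $Lk\left(\left[S_{\alpha}\right]\right)$, and smaller still (or of different type) in $Lk\left(\left[D\right]\right)$ since a meridian cannot coexist with the two adjacent pants.

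The hardest part will be making the comparison between the pants link and the annular link fully rigorous, since their ambient dimensions coincide and one must isolate an isomorphism-invariant feature rather than a mere numerical coincidence. I would therefore frame the distinguishing invariant intrinsically—for instance as the maximal dimension of a simplex $\tau$ in $Lk(v)$ with the property that $v$ together with $\tau$ fails to lie in any $(5g-6)$-simplex, i.e. a measure of how many top-dimensional simplices share the vertex and how they overlap. Verifying that this invariant separates the three cases reduces to the combinatorial counting of compressible versus incompressible pairs of pants in a completing decomposition, exactly the mechanism already used in Lemmas \ref{dim1} and \ref{dim2}, and I would carry it out case-by-case (separating versus non-separating for each vertex type), concluding that the three links are pairwise non-isomorphic.
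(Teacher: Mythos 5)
Your first step coincides with the paper's: Lemmas \ref{dim1} and \ref{dim2} give $\dim Lk([S_{\alpha}])=5g-7\neq 5g-9=\dim Lk([D])$, and you correctly observe that $\dim Lk([P])=5g-7$ as well, so dimension cannot separate the pants link from the annular link. The problem is that at exactly this point --- which you yourself identify as the crux --- your argument stops being a proof and becomes a list of candidate invariants, none of which is made precise or verified. Worse, the candidates you name are not invariants of the link as an abstract simplicial complex. The ``subcomplex of $Lk(v)$ spanned by the pants vertices it contains'' presupposes that an isomorphism of links matches pants vertices with pants vertices, which is essentially the statement being proved; and ``the maximal dimension of a simplex $\tau$ in $Lk(v)$ such that $v$ together with $\tau$ fails to lie in any $(5g-6)$-simplex'' is a property of how $Lk(v)$ sits inside $\mathcal{I}(H_{g})$, not of $Lk(v)$ itself, so an abstract isomorphism $Lk([P])\rightarrow Lk([S_{\alpha}])$ need not respect it. The claim that the pants-subcomplex has strictly smaller dimension in $Lk([P])$ than in $Lk([S_{\alpha}])$ is asserted, not proved, and it is not at all clear that it is true.

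The paper's device for the remaining comparisons is much simpler and genuinely intrinsic: if $\beta$ is a boundary component of $P$, then $[S_{\beta}]$ is a vertex of $Lk([P])$ joined by an edge to \emph{every} other vertex of $Lk([P])$ (one may isotope $S_{\beta}$ into a collar of $P$, so any surface disjoint from $P$ is disjoint from $S_{\beta}$); that is, $Lk([P])$ has a cone vertex. By contrast, $Lk([D])$ and $Lk([S_{\alpha}])$ have no such vertex: given any $[Q]$ in the link, one finds a curve $\gamma$ disjoint from $\partial D$ (respectively from $\alpha$) which essentially intersects a boundary component of $Q$, and then the annular or meridian vertex determined by $\gamma$ lies in the link but is not adjacent to $[Q]$. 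Existence of a cone vertex is preserved by any simplicial isomorphism, so this disposes of both $Lk([P])\ncong Lk([D])$ and $Lk([P])\ncong Lk([S_{\alpha}])$ at once. To repair your proposal you would need to replace your relative, type-dependent invariants by an intrinsic one of this kind and actually verify it in each case.
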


\begin{proof}
By the previous two Lemmata, the links of the vertices $\left[  D\right]  $
and $\left[  S_{\alpha}\right]  $ have distinct dimensions, hence, it is clear
that $Lk\left(  \left[  D\right]  \right)  \ncong Lk\left(  \left[  S_{\alpha
}\right]  \right)  .$ It remains to distinguish $Lk\left(  \left[  P\right]
\right)  $ from $Lk\left(  \left[  D\right]  \right)  $ and $Lk\left(  \left[
S_{\alpha}\right]  \right)  .$

Let $\left[  P\right]  $ be a vertex in $\mathcal{I}\left(  M\right)  $ such
that $P$ is a pair of pants with boundary components $\beta,\gamma,\delta.$
The vertices in $Lk\left(  \left[  P\right]  \right)  $ form a cone graph,
that is, the vertex $\left[  S_{\beta}\right]  $ belongs to $Lk\left(  \left[
P\right]  \right)  $ and is connected by an edge with any vertex in $Lk\left(
\left[  P\right]  \right)  .$ We will reach a contradiction by showing that
\begin{equation}
\forall \left[  Q\right]  \in Lk\left(  \left[  D\right]  \right)
,\exists \left[  R\right]  \in Lk\left(  \left[  D\right]  \right)  :\left[
Q\right]  \cap \left[  R\right]  \neq \varnothing \tag{$\ast$}\label{starp}%
\end{equation}
and similarly for $Lk\left(  \left[  S_{\alpha}\right]  \right)  .$ For, if
$\beta_{Q}$ is a boundary component of a surface representing $\left[
Q\right]  \in Lk\left(  \left[  D\right]  \right)  $ then there exists a curve
$\gamma$ such that $\partial D\cap \gamma=\varnothing$ and $\gamma \cap \beta
_{Q}\neq \varnothing.$ Let $\left[  R\right]  $ be the vertex represented by
$S_{\gamma}$ if $\gamma$ is non-meridian and by $D_{\gamma}$ if $\gamma$ is a
meridian boundary. Then $\left[  R\right]  \in Lk\left(  \left[  D\right]
\right)  $ is the required vertex which is not connected by an edge with
$\left[  Q\right]  ,$ thus $Lk\left(  \left[  D\right]  \right)  $ satisfies
property $(\ast).$ Similarly, we show that $Lk\left(  \left[  S_{\alpha
}\right]  \right)  $ also satisfies property $(\ast).$
\end{proof}

\begin{remark}
\label{r5}Let $\alpha,\beta,\gamma$ be non-separating curves in $\partial
H_{2}$ decomposing $\partial H_{2}$ into two components which we denote by
$P,P^{\prime}.$ Note that $P,P^{\prime}$ may not be isotopic. To see this,
denote by $f_{1},f_{2}$ the generators of $\pi_{1}\left(  H_{2}\right)  $
corresponding to the longitudes of $H_{2}.$ We may choose non-separating
curves $\alpha,\beta$ on $\partial H_{2}$ which represnt the second powers
$f_{1}^{2},f_{2}^{2}$ up to conjugacy. Choose an essential non-separating
curve $\gamma$ such that $\alpha,\beta,\gamma$ are mutually disjoint and non
isotopic. These curves separate $\partial H_{2}\ $into two components (pairs
of pants) $P$ and $P^{\prime}.$ If $P,P^{\prime}$ were isotopic then $H_{2}$
would be homeomorphic to the product $P\times \left[  0,1\right]  $ and any two
of the boundary components of $P$ would give rise to generators for $\pi
_{1}\left(  H_{2}\right)  .$ Since neither $\alpha \simeq f_{1}^{2}$ nor
$\beta \simeq f_{2}^{2}$ are generators for the free group on $f_{1},f_{2}$ it
follows that, for this particular choice of $\alpha,\beta,\gamma,$ the
surfaces $P,P^{\prime}$ are not isotopic.\smallskip
\end{remark}

\section{Proof of the Main Theorem\label{main}}

\bigskip Let
\[
A:\mathcal{MCG}\left(  H_{g}\right)  \rightarrow Aut\left(  \mathcal{I}\left(
H_{g}\right)  \right)
\]
be the map sending a mapping class $F$ to the automorphism it induces on
$\mathcal{I}\left(  H_{g}\right)  ,$ that is, $A\left(  F\right)  $ is given
by
\[
A\left(  F\right)  \left[  S\right]  :=\left[  F\left(  S\right)  \right]  .
\]

\begin{theorem}
\label{main_theorem} The map $A:\mathcal{MCG}\left(  H_{g}\right)  \rightarrow
Aut\left(  \mathcal{I}\left(  H_{g}\right)  \right)  $ is onto for $g\geq2$
and injective for $g\geq3.$ For $g=2,$ $A$ has a $\mathbb{Z}_{2}-$kernel
generated by the hyper-elliptic involution.
\end{theorem}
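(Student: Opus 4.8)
The plan is to bootstrap from the link analysis already carried out to the classical rigidity of the curve complex and the standard theory of handlebodies; throughout write $\Sigma_{g}=\partial H_{g}$. Let $f\in Aut\left(\mathcal{I}\left(H_{g}\right)\right)$. Since any automorphism carries the link of a vertex isomorphically onto the link of its image, Proposition \ref{non_iso} forces $f$ to preserve each of the three vertex types; in particular $f$ preserves the subcomplexes $\mathcal{D}\left(H_{g}\right)$ and $\mathcal{A}\left(H_{g}\right)$, hence the copy of $\mathcal{C}\left(\Sigma_{g}\right)=\mathcal{D}\left(H_{g}\right)\cup\mathcal{A}\left(H_{g}\right)$ sitting inside $\mathcal{I}\left(H_{g}\right)$ identified in Section~1. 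Because adjacency in that copy records exactly disjointness of the underlying curves, the restriction $f_{\ast}:=f\mid_{\mathcal{C}\left(\Sigma_{g}\right)}$ is a genuine simplicial automorphism of $\mathcal{C}\left(\Sigma_{g}\right)$. By the theorems of Ivanov and Luo (\cite{I3}, \cite{Luo}), for $g\geq2$ every such automorphism is induced by a homeomorphism $h:\Sigma_{g}\rightarrow\Sigma_{g}$, so I would fix such an $h$ realizing $f_{\ast}$.

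The crux of surjectivity is to promote $h$ to the solid handlebody and then match it with $f$. Since $f$ preserves $\mathcal{D}\left(H_{g}\right)$, the map $h$ sends meridian curves to meridian curves; a homeomorphism of $\Sigma_{g}$ with this property extends over $H_{g}$, because choosing a complete disk system $D_{1},\ldots,D_{g}$ the curves $h\left(\partial D_{i}\right)$ again bound a disk system, so $h$ can be extended across those disks and then across the complementary ball by the Alexander trick, giving $H:H_{g}\rightarrow H_{g}$ with $H\mid_{\Sigma_{g}}=h$. It then remains to verify $A\left(H\right)=f$. The two automorphisms agree on $\mathcal{C}\left(\Sigma_{g}\right)$ by construction, so the only issue is the pants vertices. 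Here I would use that a pants vertex $\left[P\right]$ is pinned down inside $\mathcal{I}\left(H_{g}\right)$ by the set of annular vertices adjacent to it, namely the classes $\left[S_{\delta}\right]$ with $\delta$ isotopic into the complement of $P$: this set recovers $\Sigma_{g}\setminus P$, hence $P$, up to isotopy, and in particular separates the two non-isotopic pants with common boundary of Remark \ref{r5}. As $f$ and $A\left(H\right)$ induce the same permutation of annular vertices and both preserve adjacency, they send $\left[P\right]$ to the same pants vertex; thus $A\left(H\right)=f$ and $A$ is onto.

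For the kernel, suppose $A\left(F\right)=\mathrm{id}$. Then $F$ fixes every annular and meridian vertex, so $h:=F\mid_{\Sigma_{g}}$ fixes every isotopy class of essential curve, i.e.\ $h$ lies in the kernel of the action of $\mathcal{MCG}\left(\Sigma_{g}\right)$ on $\mathcal{C}\left(\Sigma_{g}\right)$, which is trivial for $g\geq3$ and is generated by the hyper-elliptic involution $\iota$ for $g=2$. I would combine this with the injectivity of the restriction homomorphism $\mathcal{MCG}\left(H_{g}\right)\rightarrow\mathcal{MCG}\left(\Sigma_{g}\right)$, proved directly: a mapping class of $H_{g}$ restricting to the identity on $\Sigma_{g}$ fixes a disk system up to isotopy by irreducibility of $H_{g}$, hence is isotopic to the identity rel boundary by the Alexander trick. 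For $g\geq3$ this gives $F=\mathrm{id}$, so $A$ is injective. For $g=2$, $h$ is either trivial, giving $F=\mathrm{id}$, or equals $\iota$, in which case $F$ is the unique mapping class $\tilde{\iota}$ of $H_{2}$ restricting to $\iota$, which exists because $\iota$ extends over the standard genus-two handlebody. Finally $\tilde{\iota}\in\ker A$, since it fixes all annular and meridian vertices and therefore, by the determinacy of pants vertices above, all pants vertices as well; hence $\ker A=\langle\tilde{\iota}\rangle\cong\mathbb{Z}_{2}$.

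The steps I expect to demand the most care are the verification that $A\left(H\right)=f$ on pants vertices, and the parallel claim that $\tilde{\iota}$ acts trivially on them. Both rest on the single combinatorial fact that a pants vertex is determined by the annular vertices adjacent to it, which is precisely what forbids an automorphism, or the hyper-elliptic involution, from interchanging the two non-isotopic pants of Remark \ref{r5}. The extension of $h$ is routine once meridian-preservation is in hand, and the injectivity of restriction to the boundary is classical; the genuinely delicate bookkeeping is controlling the interaction between the purely combinatorial automorphism $f$ and the geometric data of the complementary subsurfaces, so that is where I would concentrate the detailed argument.
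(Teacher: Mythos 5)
Your overall architecture coincides with the paper's: use Proposition \ref{non_iso} to see that $f$ preserves vertex types, restrict to the copy of $\mathcal{C}\left(\partial H_{g}\right)=\mathcal{D}\left(H_{g}\right)\cup\mathcal{A}\left(H_{g}\right)$, invoke Ivanov--Luo, extend the resulting surface homeomorphism over $H_{g}$ using meridian-preservation, and then fight the battle at the pants vertices. The injectivity argument is also the paper's (uniqueness of the extension of a boundary homeomorphism to the handlebody, plus the $\mathbb{Z}_{2}$ kernel of $\mathcal{MCG}\left(\partial H_{2}\right)\rightarrow Aut\left(\mathcal{C}\left(\partial H_{2}\right)\right)$). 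So the comparison comes down to the one step you yourself flag as delicate, and there your argument has a genuine gap.

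Your key combinatorial claim is that a pants vertex $\left[P\right]$ is determined by the set of annular vertices adjacent to it, and that this ``in particular separates the two non-isotopic pants with common boundary of Remark \ref{r5}.'' The second half of that sentence is false, and it is false precisely in the case it is supposed to handle. In the configuration of Remark \ref{r5}, the complement of $P$ in $\partial H_{2}$ is $P^{\prime}$ and vice versa; since a pair of pants carries no essential simple closed curves other than those parallel to its boundary, the annular (and meridian) vertices adjacent to $\left[P\right]$ and to $\left[P^{\prime}\right]$ are exactly the same set, namely the classes coming from the three common boundary curves $\alpha,\beta,\gamma$. So adjacency to annuli cannot distinguish $\left[P\right]$ from $\left[P^{\prime}\right]$, and your verification that $A\left(H\right)=f$ on pants vertices does not go through for $g=2$. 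The same false claim is then reused to assert that the lift $\tilde{\iota}$ of the hyper-elliptic involution fixes all pants vertices; this is in fact the most delicate point of the whole genus-$2$ story (note that $\iota$, realized with its six fixed points on $\alpha\cup\beta\cup\gamma$, acts freely on $P\sqcup P^{\prime}$ and hence interchanges the two subsurfaces of $\partial H_{2}$), so it cannot be dispatched by a determinacy statement that is violated by this very example. Even for $g\geq3$ your claim that the adjacent annular vertices ``recover $\Sigma_{g}\setminus P$, hence $P$'' is exactly the content that needs proof: the paper's Lemma \ref{l1} supplies it by a case analysis of the complementary pieces $\Sigma_{g-2,3}$, $\Sigma_{g_{1},1}\cup\Sigma_{g-g_{1}-1,2}$, etc., producing in each case a non-meridian curve disjoint from $\partial P$ but essentially intersecting $\partial P^{\prime}$ --- and it explicitly records the Remark \ref{r5} configuration as the one exception, rather than claiming to resolve it by adjacency data. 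To repair your argument you must either reproduce that case analysis, or otherwise handle the residual ambiguity $\left[P\right]\leftrightarrow\left[P^{\prime}\right]$ in genus $2$ by a separate geometric argument; as written, both the surjectivity and the kernel computation are incomplete for $g=2$.
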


We will use the following immediate Corollary of Proposition \ref{non_iso}.

\begin{corollary}
\label{non_iso_more}Automorphisms of $\mathcal{I}\left(  H_{g}\right)  $
preserve all types (meridian, annular and pants) of vertices.
\end{corollary}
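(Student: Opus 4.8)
The plan is to treat Corollary \ref{non_iso_more} as a formal consequence of Proposition \ref{non_iso} combined with the standard fact that simplicial automorphisms respect links. First I would record this functoriality explicitly: if $f \in Aut(\mathcal{I}(H_g))$ and $[S]$ is any vertex, then $f$ restricts to a simplicial isomorphism $Lk([S]) \to Lk(f([S]))$. This is immediate from the definition of $Lk$ recalled earlier, since $f$ is a bijection on simplices preserving the face relation: it carries every simplex $\sigma$ containing $[S]$ to a simplex $f(\sigma)$ containing $f([S])$, and carries the faces of $\sigma$ avoiding $[S]$ bijectively onto the faces of $f(\sigma)$ avoiding $f([S])$. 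Taking the union over all such $\sigma$ yields the asserted isomorphism $Lk([S]) \cong Lk(f([S]))$.

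With this in hand I would invoke Proposition \ref{non_iso} directly. Since the links of a meridian vertex, an annular vertex, and a pants vertex are pairwise non-isomorphic as complexes, the isomorphism class of $Lk([S])$ determines, without ambiguity, into which of the three families $[S]$ falls. Combining the two observations: because $Lk(f([S])) \cong Lk([S])$, the vertex $f([S])$ has a link isomorphic to that of $[S]$, hence $f([S])$ is of the same type (meridian, annular, or pants) as $[S]$. As $f$ ranges over all automorphisms and $[S]$ over all vertices, this is exactly the statement that every automorphism of $\mathcal{I}(H_g)$ preserves the three vertex types.

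I would emphasize that there is no genuine obstacle remaining at this stage: all of the real content has already been absorbed into Proposition \ref{non_iso} and the two dimension computations (Lemmata \ref{dim1} and \ref{dim2}) on which it rests, where meridian and annular links are separated by their differing dimensions and the pants link is separated from both via the cone-graph/property $(\ast)$ dichotomy. The only thing this corollary adds is the trivial but essential remark that $Lk$ is an invariant carried along by any global automorphism, so the classification of links translates at once into a classification of vertices. Consequently no new geometric or combinatorial input is required, and the argument is purely formal.
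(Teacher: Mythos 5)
Your argument is correct and is exactly the route the paper intends: the paper states the corollary as an immediate consequence of Proposition \ref{non_iso}, and your write-up simply makes explicit the standard fact that a simplicial automorphism induces an isomorphism $Lk([S])\cong Lk(f([S]))$, after which the pairwise non-isomorphism of the three link types forces $f$ to preserve vertex type. Nothing further is needed.
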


We will also need the following

\begin{lemma}
\label{l1}If $f$ $\in Aut\left(  \mathcal{I}\left(  H_{g}\right)  \right)  $
and $f|_{\mathcal{D}\left(  H_{g}\right)  \cup \mathcal{A}\left(  H_{g}\right)
}=id_{\mathcal{D}\left(  H_{g}\right)  \cup \mathcal{A}\left(  H_{g}\right)  }$
then $f\left(  \left[  S\right]  \right)  =\left[  S\right]  $ for any vertex
$\left[  S\right]  \in \mathcal{I}\left(  M\right)  $ except in the case
mentioned in Remark \ref{r5}, namely, if $g=2$ and $P$ is a pair of pants with
all boundary components of $\partial P$ being separating curves decomposing
$\partial H_{2}$ into $2$ components $P,P^{\prime}$, then either, $f\left(
\left[  P\right]  \right)  =\left[  P\right]  $ or, $f\left(  \left[
P\right]  \right)  =\left[  P^{\prime}\right]  .$
\end{lemma}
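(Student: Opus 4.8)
The plan is to show that an automorphism $f$ fixing $\mathcal{D}\left(H_{g}\right)\cup\mathcal{A}\left(H_{g}\right)$ pointwise is forced to fix every pants vertex as well, by exploiting the fact that a pair of pants $P$ is essentially determined by its three boundary curves $\beta,\gamma,\delta$, which are already-fixed vertices of $\mathcal{C}\left(\partial H_{g}\right)$ sitting inside $\mathcal{I}\left(H_{g}\right)$. The first step is to record exactly how $\left[P\right]$ relates to the annular and meridian vertices: since $P$ embeds in $\partial H_{g}$ with boundary $\beta\cup\gamma\cup\delta$, the vertices $\left[S_{\beta}\right],\left[S_{\gamma}\right],\left[S_{\delta}\right]$ (or the meridian vertices $\left[D_{\beta}\right]$, etc., in case some boundary component is a meridian) all lie in $Lk\left(\left[P\right]\right)$, and these three curves cobound $P$. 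The key observation is that $P$ can be characterized, among all pants vertices, as the unique pants vertex disjoint from all three of $\left[S_{\beta}\right],\left[S_{\gamma}\right],\left[S_{\delta}\right]$ and lying in the pair-of-pants region they cut off.

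The central step is then an isotopy-uniqueness argument. By Corollary \ref{non_iso_more}, $f$ maps the pants vertex $\left[P\right]$ to another pants vertex $f\left(\left[P\right]\right)=\left[P'\right]$. I would show that $P'$ has the same three boundary curves as $P$: each boundary component of $P$ corresponds, via the correspondence described in the excerpt between curves in $\partial H_{g}$ and annular/meridian vertices, to a vertex of $\mathcal{C}\left(\partial H_{g}\right)$ that $f$ fixes, and disjointness is preserved by $f$, so $P'$ is disjoint from exactly the same annular and meridian vertices as $P$, forcing $\partial P'\simeq\partial P$. Once the boundary curves agree, I would invoke the standard fact that an incompressible, $\partial$-parallel pair of pants in $H_{g}$ is determined up to isotopy by its boundary together with the choice of which complementary region of $\partial H_{g}\setminus(\beta\cup\gamma\cup\delta)$ it is parallel to. Generically there is only one incompressible such region (the other side may be compressible or of the wrong topological type), so $P'\simeq P$ and hence $f\left(\left[P\right]\right)=\left[P\right]$.

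The exception in the statement is precisely the ambiguous case of this last argument. When $g=2$ and all three boundary curves are non-separating and non-meridian, the two complementary pieces $P$ and $P'$ of $\partial H_{2}\setminus(\beta\cup\gamma\cup\delta)$ are \emph{both} incompressible pairs of pants with the same boundary, as analyzed in Remark \ref{r5}. In this situation $f$ can only be guaranteed to permute the set $\{\left[P\right],\left[P'\right]\}$, giving the stated dichotomy. So I would carry out the generic argument first, then isolate the genus-$2$ three-non-separating-curve configuration and show that there $P$ and $P'$ share all link data (they are disjoint from exactly the same vertices, since their boundaries coincide), so no further distinction can be made by $f$.

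I expect the main obstacle to be the boundary-determination step: proving that the three fixed boundary curves genuinely pin down the pants region up to the $\{P,P'\}$ ambiguity. The difficulty is that $f$ only ``knows'' $P$ through its incidences in $\mathcal{I}\left(H_{g}\right)$, so I must verify that the combinatorial data — which annular, meridian, and pants vertices are joined to $\left[P\right]$ by an edge — suffices to recover $\partial P$ and the side on which $P$ sits. Concretely, I would argue that the annular vertices $\left[S_{\beta}\right],\left[S_{\gamma}\right],\left[S_{\delta}\right]$ are the only annular vertices in $Lk\left(\left[P\right]\right)$ whose curves can simultaneously be realized disjointly from $P$ on its boundary, and that $f$ being the identity on these vertices pins the boundary; the residual freedom is then exactly the choice of complementary region, which collapses to a single option unless we are in the Remark \ref{r5} configuration.
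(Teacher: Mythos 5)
Your overall strategy coincides with the paper's: use the fact that $f$ fixes every annular and meridian vertex to pin down $\partial P$, then argue that a boundary-parallel incompressible pair of pants is determined by its boundary up to the genus-$2$ ambiguity of Remark \ref{r5}. However, there is a genuine gap at the step you yourself flag as the main obstacle, namely the claim that ``$P'$ is disjoint from exactly the same annular and meridian vertices as $P$, forcing $\partial P'\simeq\partial P$.'' What adjacency-preservation actually gives you directly is only the weaker statement that every boundary component of $P$ can be isotoped off every boundary component of $P'$ (if $\left[\alpha_{i_0}\right]\cap\left[\alpha'_{j_0}\right]\neq\varnothing$, then $\left[S_{\alpha_{i_0}}\right]$ is adjacent to $\left[P\right]$ but not to $\left[P'\right]$, contradicting that $f$ fixes $\left[S_{\alpha_{i_0}}\right]$). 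Disjoint does not mean isotopic: the boundary curves of $P'$ could a priori be essential curves lying in a complementary component of $\partial H_g\setminus\partial P$ other than the pants region. The paper closes exactly this case with a separate argument: it enumerates the possible complementary pieces $\Sigma_{g',b}$ obtained by cutting $\partial H_g$ along $\alpha_1,\alpha_2,\alpha_3$, observes that $P'$ must lie in one of them, and produces a non-meridian curve $\alpha$ disjoint from all $\alpha_i$ but essentially intersecting some $\alpha'_{j_0}$; then $\left[S_{\alpha}\right]$ is a fixed vertex adjacent to $\left[P\right]$ but not to $\left[P'\right]$, a contradiction. Your sketched substitute for this step --- that $\left[S_{\beta}\right],\left[S_{\gamma}\right],\left[S_{\delta}\right]$ are ``the only annular vertices in $Lk\left(\left[P\right]\right)$ whose curves can simultaneously be realized disjointly from $P$ on its boundary'' --- is not a characterization visible to the automorphism (it refers to the surface $P$ itself rather than to incidence data in the complex), so it cannot be used as stated.

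Once the boundary identification is actually established, your final step (the pants is determined by its boundary together with the choice of complementary region, with two choices only in the genus-$2$, three-non-separating-curves configuration) matches the paper, and you correctly isolate the exceptional case; note in passing that the complementary region is always a subsurface of $\partial H_g$, so the issue is whether it is a pair of pants at all, not whether it is ``compressible.'' To complete the proof you would need to supply the missing cut-and-intersect argument (or an equivalent combinatorial characterization of $\partial P$ from $Lk\left(\left[P\right]\right)$), for all cases where the boundaries of $P$ and $P'$ are disjoint but not isotopic.
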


\begin{proof}
We have to show that $f\in Aut\left(  \mathcal{I}\left(  H_{g}\right)
\right)  $ fixes every vertex $\left[  P\right]  $ where $P$ is a pair of
pants. Let $\left[  P\right]  $ be such a vertex in $\mathcal{I}\left(
H_{g}\right)  .$ By Corollary \ref{non_iso_more} it is clear that $f\left(
\left[  P\right]  \right)  $ is a vertex $\left[  P^{\prime}\right]  $ with
$P^{\prime}$ being a pair of pants. Denote by $\alpha_{1},$ $\alpha_{2},$
$\alpha_{3}$ the boundary components of $P$ and, similarly, $\alpha
_{1}^{\prime},$ $\alpha_{2}^{\prime},$ $\alpha_{3}^{\prime}$ for $P^{\prime}.$
If $\left[  \alpha_{i_{0}}\right]  \cap \left[  \alpha_{j_{0}}^{\prime}\right]
\neq \varnothing$ for some $i_{0},j_{0}\in \{1,2,3\}$ then the vertex $\left[
S_{\alpha_{i_{0}}}\right]  $ is connected by an edge with $\left[  P\right]  $
and is not connected by an edge with $\left[  P^{\prime}\right]  .$ As
$\left[  S_{\alpha_{i_{0}}}\right]  $ is fixed by $f,$ it follows that
$f\left(  \left[  P\right]  \right)  $ cannot be equal to $\left[  P^{\prime
}\right]  .$ Thus, we may assume that
\begin{equation}
\left[  \alpha_{i}\right]  \cap \left[  \alpha_{j}^{\prime}\right]
=\varnothing \mathrm{\ for\ all\ }i,j=1,2,3. \tag{$\ast \ast$}%
\end{equation}
Consider the following property:
\begin{equation}
\mathrm{Up\ to\ change\ of\ enumeration,\ }\alpha_{i}\simeq \alpha_{i}^{\prime
}\mathrm{\ for\ }i=1,2,3. \tag{$\ast \ast \ast$}%
\end{equation}

If property $(\ast \ast \ast)$ holds then $P\simeq P^{\prime}$ unless $g=2$ and
$\alpha_{1},$ $\alpha_{2},$ $\alpha_{3}$ are all non-separating curves which
decompose $\partial H_{2}$ into $2$ pairs of pants (cf. Remark \ref{r5}) which
may or may not be isotopic. Thus, if property $(\ast \ast \ast)$ holds then
either $f\left(  \left[  P\right]  \right)  =\left[  P\right]  $ or the
exception in the statement of the lemma occurs.

We examine now the case where $g\geq3$ and property $(\ast \ast \ast)$ does not
hold. By assumption $(\ast \ast),$ we may cut $\partial H_{g}$ along
$\alpha_{1},$ $\alpha_{2},$ $\alpha_{3}$ to obtain either

\begin{itemize}
\item the surface $P$ and a surface $\Sigma_{g-2,3}$ (if all $\alpha_{1},$
$\alpha_{2},$ $\alpha_{3}$ are non-separating) or,

\item the surface $P,$ a surface $\Sigma_{g_{1},1}$ and a surface
$\Sigma_{g-g_{1}-1,2}$ for some $0<g_{1}<g$ (if exactly one of $\alpha_{1},$
$\alpha_{2},$ $\alpha_{3}$ is separating and the other two curves are
non-isotopic) or,

\item the surface $P$ and a surface $\Sigma_{g-1,1}$ (if exactly one of
$\alpha_{1},$ $\alpha_{2},$ $\alpha_{3}$ is separating and the other two
curves are isotopic) or,

\item the surface $P$ and surfaces $\Sigma_{g_{1},1},\Sigma_{g_{2},1}%
,\Sigma_{g_{3},1}$ for some $g_{1},g_{2},g_{3}\geq1$ with $g_{1}+g_{2}%
+g_{3}=g$ (if all $\alpha_{1},$ $\alpha_{2},$ $\alpha_{3}$ are separating)
\end{itemize}

\noindent Note that if $P$ is a pair of pants, it is impossible to have
exactly two of its boundary curves $\alpha_{1},$ $\alpha_{2},$ $\alpha_{3}$
being separating. In all cases, $P^{\prime}$ is contained in a surface of the
form $\Sigma_{g^{\prime},b}$ for some $g^{\prime}\in \left \{  1,\ldots
,g-1\right \}  $ and $b\in \left \{  1,2,3\right \}  $ mentioned above. Thus, we
may find a non-meridian curve $\alpha$ in $\partial H_{g}$ such that
\[
\alpha \cap \alpha_{i}=\varnothing,\forall i=1,2,3\mathrm{\ and\ }\left[
\alpha \right]  \cap \left[  \alpha_{j_{0}}^{\prime}\right]  \neq \varnothing
\mathrm{\ for\ some\ }j_{0}\in \{1,2,3\}.
\]
Then, for the annular surface $S_{\alpha}$ we have that $\left[  S_{\alpha
}\right]  $ is connected by an edge with $\left[  P\right]  $ and is not
connected by an edge with $\left[  P^{\prime}\right]  .$ As $\left[
S_{\alpha}\right]  $ is fixed by $f,$ it follows that $f\left(  \left[
P\right]  \right)  $ cannot be equal to $\left[  P^{\prime}\right]  .$ This
completes the proof of the lemma.
\end{proof}


\begin{proof}
[Proof of Theorem \ref{main_theorem}]We will use the corresponding result for
surfaces, see \cite{I3},\cite{Luo}, which applies to the boundary of the
handlebody $\partial H_{g}.$

We first show that every $f\in Aut\left(  \mathcal{I}\left(  H_{g}\right)
\right)  $ is geometric. By Proposition \ref{non_iso} we know that $f\left(
\mathcal{A}\left(  H_{g}\right)  \right)  =\mathcal{A}\left(  H_{g}\right)  $
and $f\left(  \mathcal{D}\left(  H_{g}\right)  \right)  =\mathcal{D}\left(
H_{g}\right)  .$ In particular, $f\left(  \mathcal{C}\left(  \partial
H_{g}\right)  \right)  =\mathcal{C}\left(  \partial H_{g}\right)  .$ The
restriction $f|_{\mathcal{C}\left(  \partial H_{g}\right)  }$ of $f$ on
$\mathcal{C}\left(  \partial H_{g}\right)  $ induces an automorphism of
$\mathcal{C}\left(  \partial H_{g}\right)  $ which by the analogous result for
surfaces (see \cite{I3},\cite{Luo}) is geometric, that is, there exists a
homeomorphism
\[
F_{\partial H_{g}}:\partial H_{g}\rightarrow \partial H_{g}%
\]
such that $A\left(  F_{\partial H_{g}}\right)  =f|_{\mathcal{C}\left(
\partial H_{g}\right)  }.$ As $f|_{\mathcal{C}\left(  \partial H_{g}\right)
}$ maps $\mathcal{D}\left(  M\right)  $ to $\mathcal{D}\left(  M\right)  ,$
$F_{\partial H_{g}}$ sends meridian boundaries to meridian boundaries. It
follows that $F_{\partial H_{g}}$ extends to a homeomorphism $F:H_{g}%
\rightarrow H_{g}.$ We know that $A\left(  F\right)  =f$ on $\mathcal{C}%
\left(  \partial H_{g}\right)  $ and we must show that $A\left(  F\right)  =f$
on $\mathcal{I}\left(  H_{g}\right)  .$ This follows from Lemma \ref{l1} which
completes the proof that every $f\in Aut\left(  \mathcal{I}\left(
H_{g}\right)  \right)  $ is geometric. \newline Let $f\in Aut\left(
\mathcal{I}\left(  H_{g}\right)  \right)  .$ Since $A$ is shown to be onto,
there exists a homeomorphism $F:H_{g}\rightarrow H_{g}$ such that $A\left(
\left[  F\right]  \right)  =f.$ This implies that $f\left(  \mathcal{D}\left(
H_{g}\right)  \right)  =\mathcal{D}\left(  H_{g}\right)  $ and $f\left(
\mathcal{A}\left(  H_{g}\right)  \right)  =\mathcal{A}\left(  H_{g}\right)  .$
In particular, $f$ restricted to $\mathcal{C}\left(  \partial H_{g}\right)
\equiv \mathcal{D}\left(  H_{g}\right)  \cup \mathcal{A}\left(  H_{g}\right)  $
induces an automorphism $\overline{f}$ of the complex of curves $\mathcal{C}%
\left(  \partial H_{g}\right)  .$ By \cite{I3}, \cite{Luo}, there exists a
homeomorphism $F_{\partial H_{g}}:\partial H_{g}\rightarrow \partial H_{g}$
such that $A\left(  F_{\partial H_{g}}\right)  =\overline{f}.$ Such a
homeomorphism is unique unless $g=2$ in which case the map
\[
\mathcal{MCG}\left(  \partial H_{2}\right)  \rightarrow Aut\left(
\mathcal{C}\left(  \partial H_{2}\right)  \right)
\]
has a $\mathbb{Z}_{2}-$kernel generated by an involution of $\partial H_{2}.$
However, any homeomorphism of $\partial H_{g}$ which extends to $H_{g}$ it
does so uniquely (see, for example, \cite[Theorem 3.7 p.94]{F-M}), and
therefore the map
\[
\mathcal{MCG}\left(  H_{g}\right)  \rightarrow Aut\left(  \mathcal{I}\left(
H_{g}\right)  \right)
\]
is injective unless $g=2$ in which case it has a $\mathbb{Z}_{2}-$kernel.
\end{proof}

\section{Applications}

We first establish hyperbolicity for $\mathcal{I}\left(  H_{g}\right)  .$

\begin{proposition}
The complex $\mathcal{I}\left(  H_{g}\right)  $ is $\delta-$hyperbolic in the
sense of Gromov.
\end{proposition}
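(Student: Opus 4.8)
The plan is to prove hyperbolicity by exhibiting a quasi-isometry between $\mathcal{I}\left(H_{g}\right)$ and its subcomplex $\mathcal{C}\left(\partial H_{g}\right)$, which is $\delta$-hyperbolic by \cite{M-M2}, \cite{B}. Since Gromov hyperbolicity is a quasi-isometry invariant among geodesic metric spaces, this suffices. I would work throughout with the $1$-skeleta endowed with the graph metric, each of which is quasi-isometric to the corresponding full complex, and write $\iota:\mathcal{C}\left(\partial H_{g}\right)\hookrightarrow\mathcal{I}\left(H_{g}\right)$ for the inclusion. Because $\mathcal{C}\left(\partial H_{g}\right)$ is a subcomplex, any edge-path lying in it is also a path in $\mathcal{I}\left(H_{g}\right)$, so $\iota$ is $1$-Lipschitz; it remains to produce a coarse inverse.

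First I would define a retraction $r:\mathcal{I}\left(H_{g}\right)\to\mathcal{C}\left(\partial H_{g}\right)$ on vertices by setting $r\left(v\right)=v$ for every meridian or annular vertex (these already lie in $\mathcal{C}\left(\partial H_{g}\right)=\mathcal{D}\left(H_{g}\right)\cup\mathcal{A}\left(H_{g}\right)$), and by sending a pants vertex $\left[P\right]$ to the vertex of $\mathcal{C}\left(\partial H_{g}\right)$ determined by a chosen boundary curve $\alpha$ of $P$ (an annular vertex $\left[S_{\alpha}\right]$ if $\alpha$ is non-meridian, a meridian vertex $\left[D_{\alpha}\right]$ otherwise). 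The choice among the three boundary curves is immaterial coarsely, since any two of them are disjoint and hence at distance $\leq 1$.

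The key step is to verify that $r$ is $1$-Lipschitz on the $1$-skeleton, i.e. that it sends adjacent vertices to vertices at distance $\leq 1$. The essential observation is that the boundary of any vertex-surface $S$ is exactly $S\cap\partial H_{g}$, so if two vertices are joined by an edge — represented by disjoint surfaces $S,S'$ in $H_{g}$ — then their boundary curves satisfy $\partial S\cap\partial S'\subseteq S\cap S'=\varnothing$. Checking the three types of adjacency (curve–curve, curve–pants, pants–pants) then reduces to this disjointness of boundary curves in $\partial H_{g}$, which yields $d_{\mathcal{C}}\left(r\left(v\right),r\left(w\right)\right)\leq 1$ in every case. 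Finally I would show that $\iota\circ r$ is within distance $1$ of the identity: a pants vertex $\left[P\right]$ is disjoint from a parallel push-off into $H_{g}$ of the annulus (or disk) determined by its chosen boundary curve, hence adjacent to $\iota\left(r\left(\left[P\right]\right)\right)$, while $\iota\circ r$ fixes curve vertices. Together with $r\circ\iota=\mathrm{id}$ and the Lipschitz bounds, this shows $\iota$ and $r$ are quasi-inverse quasi-isometries, so $\mathcal{I}\left(H_{g}\right)$ is quasi-isometric to $\mathcal{C}\left(\partial H_{g}\right)$ and therefore $\delta$-hyperbolic.

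The part demanding the most care is the Lipschitz estimate for $r$: one must ensure that the pants vertices create no shortcuts collapsing distances between curves, and that the push-off arguments realizing disjointness of a boundary-parallel pants from the annulus/disk of one of its boundary curves are carried out compatibly. Since disjointness of the surfaces forces disjointness of their traces on $\partial H_{g}$, these estimates in fact give $d_{\mathcal{I}}\left(\iota x,\iota y\right)=d_{\mathcal{C}}\left(x,y\right)$, so $\mathcal{C}\left(\partial H_{g}\right)$ embeds isometrically; the only genuinely coarse feature is the surjectivity clause, the pants vertices lying at distance one from $\mathcal{C}\left(\partial H_{g}\right)$.
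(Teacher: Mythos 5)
Your proposal is correct and follows essentially the same route as the paper: the paper likewise shows that $\mathcal{C}\left(\partial H_{g}\right)^{(1)}$ embeds isometrically (by replacing each surface on a path in $\mathcal{I}\left(H_{g}\right)^{(1)}$ with a boundary component, which is exactly your retraction $r$ applied along the path) and that every pants vertex is at distance $1$ from an annular vertex coming from its boundary, so that $\mathcal{C}\left(\partial H_{g}\right)$ is cobounded and hyperbolicity transfers. Your packaging of the argument as a pair of quasi-inverse Lipschitz maps is only a cosmetic difference; one minor remark is that the meridian case in your definition of $r$ on pants vertices never actually occurs, since a boundary-parallel pair of pants with a meridian boundary curve would be compressible.
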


\begin{proof}
As far as hyperbolicity is concerned, the 1-skeleton $\mathcal{I}\left(
H_{g}\right)  ^{(1)}$ of $\mathcal{I}\left(  H_{g}\right)  $ is relevant.
$\mathcal{I}\left(  H_{g}\right)  ^{(1)}$ is endowed with the combinatorial
metric so that each edge has length $1.$ Apparently, we have an embedding
\[
i:\mathcal{C}\left(  \partial H_{g}\right)  ^{(1)}\hookrightarrow
\mathcal{I}\left(  H_{g}\right)  ^{(1)}%
\]
with $i\left(  \mathcal{C}\left(  \partial H_{g}\right)  ^{(1)}\right)
=\mathcal{D}\left(  H_{g}\right)  ^{(1)}\cup \mathcal{A}\left(  H_{g}\right)
^{(1)}$ where the superscript $^{(1)}$ always denotes $1-$skeleton. We claim
that this embedding is isometric. Indeed, if $\left[  \alpha_{1}\right]
,\left[  \alpha_{2}\right]  $ are distinct vertices with distance
$d_{\mathcal{C}}\left(  \left[  \alpha_{1}\right]  ,\left[  \alpha_{2}\right]
\right)  $ in $\mathcal{C}\left(  \partial H_{g}\right)  ^{(1)}$ then the
distance $d_{\mathcal{I}}\left(  i\left(  \left[  \alpha_{1}\right]  \right)
,i\left(  \left[  \alpha_{2}\right]  \right)  \right)  $ cannot be smaller.
For, if $\left[  S_{0}\right]  =i\left(  \left[  \alpha_{1}\right]  \right)
,\left[  S_{1}\right]  ,\ldots,\left[  S_{k}\right]  =i\left(  \left[
\alpha_{2}\right]  \right)  $ is a sequence of vertices which gives rise to a
geodesic in $\mathcal{I}\left(  M\right)  ^{(1)}$ of length less than
$d_{\mathcal{C}}\left(  \left[  \alpha_{1}\right]  ,\left[  \alpha_{2}\right]
\right)  ,$ equivalently,
\[
d_{\mathcal{I}}\left(  i\left(  \left[  \alpha_{1}\right]  \right)  ,i\left(
\left[  \alpha_{2}\right]  \right)  \right)  =k<d_{\mathcal{C}}\left(  \left[
\alpha_{1}\right]  ,\left[  \alpha_{2}\right]  \right)
\]
then for each $j=1,2,\ldots,k-1$ consider $\beta_{j}$ to be any boundary
component of $S_{j}.$ It is clear that $\beta_{j}$ is disjoint from
$\beta_{j-1}$ and $\beta_{j+1}.$ Therefore, the sequence $\left[  \alpha
_{1}\right]  ,\left[  \beta_{1}\right]  ,\ldots,\left[  \beta_{k-1}\right]
,\left[  \alpha_{2}\right]  $ is a segment in $\mathcal{C}\left(  \partial
H_{g}\right)  ^{(1)}$ of length $k$ with $k<d_{\mathcal{C}}\left(  \left[
\alpha_{1}\right]  ,\left[  \alpha_{2}\right]  \right)  $, a contradiction.

For any vertex $\left[  P\right]  $ in $\mathcal{I}\left(  H_{g}\right)
^{(1)}\setminus \mathcal{D}\left(  H_{g}\right)  ^{(1)}\cup \mathcal{A}\left(
H_{g}\right)  ^{(1)}$ we may find an annular vertex, namely, $\left[
S_{\partial P}\right]  $ where $\partial P$ is any component of the boundary
of $P,$ which is connected by an edge with $\left[  P\right]  .$ Thus,
$\mathcal{I}\left(  H_{g}\right)  ^{(1)}$ is within bounded distance from
$i\left(  \mathcal{C}\left(  \partial H_{g}\right)  ^{(1)}\right)  .$ Since
$\mathcal{C}\left(  \partial H_{g}\right)  ^{(1)}$ is $\delta-$hyperbolic in
the sense of Gromov, so is $\mathcal{I}\left(  H_{g}\right)  ^{(1)}.$
\end{proof}

An element $F\in \mathcal{MCG}\left(  H_{g}\right)  $ is called pseudo-Anosov
when it restricts to a pseudo-Anosov homeomorphism on $\partial H_{g}.$ The
proof of the following proposition is immediate from the corresponding result
for surfaces (see \cite[Prop. 4.6]{M-M2}) along with the above mentioned fact
that $\mathcal{C}\left(  \partial H_{g}\right)  $ is cobounded in
$\mathcal{I}\left(  H_{g}\right)  .$

\begin{proposition}
For any $g\geq2,$ there exists a $c>0$ such that any pseudo-Anosov
$F\in \mathcal{MCG}\left(  H_{g}\right)  ,$ any vertex $v\in \mathcal{I}\left(
H_{g}\right)  $ and any $n\in \mathbb{Z},$
\[
d_{\mathcal{I}}\left(  F^{n}\left(  v\right)  ,v\right)  \geq c\left \vert
n\right \vert .
\]

\end{proposition}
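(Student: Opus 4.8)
The plan is to transfer the known linear lower bound on the translation length of a pseudo-Anosov acting on the curve complex to the complex $\mathcal{I}(H_g)$, exploiting that $\mathcal{C}(\partial H_g)$ sits inside $\mathcal{I}(H_g)$ as an isometrically embedded, coarsely dense subcomplex. First I would record how $F$ acts. Since $F$ is a self-homeomorphism of $H_g$, the induced automorphism $A(F)$ is a simplicial automorphism of $\mathcal{I}(H_g)$, hence an isometry of the $1$-skeleton $\mathcal{I}(H_g)^{(1)}$ with its combinatorial metric; moreover, as $F$ carries meridians to meridians and annuli to annuli, it preserves the subcomplex $\mathcal{C}(\partial H_g)=\mathcal{D}(H_g)\cup\mathcal{A}(H_g)$, and there its action agrees, through the isometric embedding $i$ of the previous Proposition, with the action of the pseudo-Anosov restriction $\overline{F}=F|_{\partial H_g}$ on $\mathcal{C}(\partial H_g)$. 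Fix a vertex $v$ and set $a_n=d_{\mathcal{I}}(F^n(v),v)$.

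For the coarse lower bound, let $c'>0$ be the uniform constant of \cite[Prop. 4.6]{M-M2}, so that $d_{\mathcal{C}}(\overline{F}^{\,n}(u),u)\geq c'|n|$ for every curve $u$ and every pseudo-Anosov of $\partial H_g$ (in particular for every $\overline{F}$ arising as a restriction, which is what will make the final constant independent of $F$). By the coboundedness established in the previous Proposition, every vertex lies within distance $1$ of $i(\mathcal{C}(\partial H_g)^{(1)})$, so I may choose $w=i(u)$ with $d_{\mathcal{I}}(v,w)\leq 1$. Since $F^n$ is an isometry, $d_{\mathcal{I}}(F^n(v),F^n(w))\leq 1$ and $F^n(w)=i(\overline{F}^{\,n}(u))$, so the triangle inequality together with the isometry of $i$ gives
\[
a_n\;\geq\;d_{\mathcal{I}}(F^n(w),w)-2\;=\;d_{\mathcal{C}}(\overline{F}^{\,n}(u),u)-2\;\geq\;c'|n|-2 .
\]

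\textbf{The main obstacle} is precisely that the transfer from $\mathcal{C}(\partial H_g)$ is only quasi-isometric, so the displayed estimate carries an additive error and, by itself, leaves the inequality $a_n\geq c|n|$ in doubt for small $|n|$; the point is to upgrade it to a clean bound valid for \emph{all} $n$ with a single constant independent of $F$ and $v$. I would do this via Fekete's subadditive lemma: because each $F^n$ is an isometry, $a_{m+n}\leq a_m+a_n$, so $\lim_{n\to\infty}a_n/n$ exists and equals $\inf_{n\geq1}a_n/n$. The displayed estimate forces $\lim_{n\to\infty}a_n/n\geq c'$, whence $a_n/n\geq c'$ for every $n\geq1$, that is $a_n\geq c'|n|$. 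The identity $a_{-n}=a_n$ (apply the isometry $F^n$) extends this to negative $n$, while $n=0$ is trivial; thus $c=c'$ works, uniformly in $F$ and $v$. Alternatively one could dispatch the finitely many small $n$ by hand, using that a pseudo-Anosov fixes no isotopy class of curve, hence $F^n(v)\neq v$ and $a_n\geq1$ for $n\neq0$, and then combine this with the asymptotic bound; the subadditivity argument is cleaner and automatically produces a constant independent of $F$.
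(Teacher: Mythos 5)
Your proof is correct and takes essentially the same route as the paper, which simply declares the result immediate from \cite[Prop.\ 4.6]{M-M2} together with the isometric embedding and coboundedness of $\mathcal{C}\left(\partial H_{g}\right)$ in $\mathcal{I}\left(H_{g}\right)$ established in the preceding proposition. Your additional step via Fekete's subadditive lemma (or the alternative of handling small $\left\vert n\right\vert$ using $a_{n}\geq1$) cleanly removes the additive error $-2$ that the paper's one-line argument leaves implicit.
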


Thus, pseudo-Anosov elements in $\mathcal{MCG}\left(  H_{g}\right)  $
correspond to hyperbolic isometries of $\mathcal{I}\left(  H_{g}\right)  $ and
there are no parabolic isometries for $\mathcal{I}\left(  H_{g}\right)  .$


\begin{thebibliography}{99}                                                                                               %


\bibitem {Akb}E. Akbas, \textit{A presentation for the automorphisms of the
3-sphere that preserve a genus two Heegaard splitting,} Pacific J. Math. 236
(2008), no. 2, 201--222.

\bibitem {B}B.H. Bowditch, \textit{Intersection numbers and the hyperbolicity
of the curve complex,} J. Reine Angew. Math. 598 (2006), pp. 105--129.

\bibitem {CPT}Ch. Charitos, I. Papadoperakis, G. Tsapogas,
\textit{Incompressible surfaces in handlebodies and isotopies}, Topology and
its Applications, Volume 155, Issue 7 (2008), pp. 696-724.

\bibitem {Cho}S. Cho, \textit{Homeomorphisms of the 3-sphere that preserve a
Heegaard splitting of genus two,} Proc. Amer. Math. Soc. 136 (2008), no. 3, 1113--1123.

\bibitem {F-M}A.T. Fomenko, S.V. Matveev, \textit{Algorithmic and Computer
Methods in 3-manifolds, }Kluwer Academic Publishers 1997.

\bibitem {Har}W. Harvey, \textit{Boundary structure of the modular group},
Riemann surfaces and related topics, in: Proceedings of the 1978 Stony Brook
Conference, State Univ. New York, Stony Brook, N.Y., 1978, Ann. Math. Stud.
97, Princeton Univ. Press, Princeton, N.J., 1981, pp. 245-251.

\bibitem {I3}N.V. Ivanov, \textit{Automorphisms of complexes of curves and of
Teichmuller spaces}, Int. Math. Res. Notice 14 (1997) 651--666.

\bibitem {K-S}M. Korkmaz, S. Schleimer, \textit{Automorphisms of the disk
complex}, http://arxiv.org/abs/0910.2038v1

\bibitem {Luo}F. Luo, \textit{Automorphisms of the complex of curves},
Topology 39 (2000) pp. 283-298.

\bibitem {M-M2}H. Masur, Y.N. Minsky, \textit{Geometry of the complex of
curves. I. Hyperbolicity,} Invent. Math. 138 (1999), no. 1, pp. 103--149.

\bibitem {M-M}H. Masur, Y.N. Minsky, \textit{Quasiconvexity in the curve
complex}, In the tradition of Ahlfors and Bers, III, 309--320, Contemp. Math.,
355, Amer. Math. Soc., Providence, RI, 2004.

\bibitem {MC}D. McCullough, \textit{Virtually geometrically finite mapping
class groups of $3$-manifolds,} J. Differential Geom. 33 (1991), no. 1, pp. 1--65.

\bibitem {Qiu}R. Qiu, \textit{Incompressible surfaces in handlebodies and
closed }$3-$\textit{manifolds of Heegaard genus }$2$, Proc. of AMS, 128 No 10
(2000), pp. 3091--3097.

\bibitem {Sch}M. Scharlemann, \textit{Automorphisms of the 3-sphere that
preserve a genus two Heegaard splitting}, Bol. Soc. Mat. Mexicana (3) 10
(2004), Special Issue, 503--514.
\end{thebibliography}
\end{document}